\newtheorem{theorem}{Theorem}
\newtheorem{corollary}[theorem]{Corollary}
\newtheorem{lemma}[theorem]{Lemma}
\theoremstyle{definition}
\newtheorem{definition}[theorem]{Definition}
\long\def\skipit#1{}
\def\CL#1{\lceil#1\rceil}
\def\FR#1#2{\frac{#1}{#2}}
\def\wG{\widetilde G}
\def\wH{\widetilde H}
\def\NN{{\mathbb N}}
\begin{document}

\title{Upper bounds for bar visibility of subgraphs and $n$-vertex graphs}

\author{
Yuanrui Feng\thanks{Tianjin University, Tianjin, China: fyr666@tju.edu.cn.}\,
\qquad
Douglas B. West\thanks{Zhejiang Normal University, Jinhua, China, and
University of Illinois, Urbana, USA: dwest@math.uiuc.edu.
Supported by NNSF of China under Grant NSFC-11871439.}
\qquad
Yan Yang\thanks{Tianjin University, Tianjin, China: yanyang@tju.edu.cn.
Supported by NNSF of China under Grant NSFC-11401430.}\,
}

\date{\today}

\maketitle

\begin{abstract}
A {\it $t$-bar visibility representation} of a graph assigns each vertex up to
$t$ horizontal bars in the plane so that two vertices are adjacent if and only
if some bar for one vertex can see some bar for the other via an unobstructed
vertical channel of positive width.  The least $t$ such that $G$ has a $t$-bar
visibility representation is the {\it bar visibility number} of $G$, denoted by
$b(G)$.  We show that if $H$ is a spanning subgraph of $G$, then
$b(H)\le b(G)+1$.  It follows that $b(G)\le \CL{n/6}+1$ when $G$ is an
$n$-vertex graph.  This improves the upper bound obtained by Chang et al.\
(\emph{SIAM J. Discrete Math.} {\bf 18} (2004) 462).\\

\noindent
Keywords: bar visibility number; bar visibility graph; planar graph.

\noindent
MSC Codes: {05C62,05C35,05C10}

\end{abstract}

\section{Introduction}
Various types of visibility representations of graphs have been studied for
their theoretical interests and applications in VLSI design.  In one type,
each vertex of the graph is assigned a horizontal bar in the plane.  The
assignment is a \textit{ bar visibility representation} if any two vertices are
adjacent if and only if the corresponding bars can see each other via an
unobstructed vertical channel of positive width.  A graph is a
\textit{ bar visibility graph} if it has a bar visibility representation.
Not all graphs have such representations; Tamassia and Tollis \cite{Tamassia86}
and Wismath \cite{Wismath85} gave a characterization:

\begin{theorem}[\cite{Tamassia86, Wismath85}]\label{3}
A graph admits a bar visibility representation if and only if it has a planar
embedding such that all cut-vertices lie on the boundary of one face.
\end{theorem}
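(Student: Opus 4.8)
The plan is to prove the two directions separately, using the theory of planar $st$-graphs (bipolar orientations) as the main tool. Recall that a $2$-connected plane graph with a chosen edge $st$ on the outer face admits an $st$-numbering $1=\sigma(s)<\cdots<\sigma(t)=n$; orienting each edge from its lower to its higher endpoint produces an acyclic orientation with unique source $s$ and unique sink $t$, in which every inner face is bounded by two directed paths with common endpoints. I will use this structure both to build representations (for sufficiency) and to read structure off of them (for necessity).

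For sufficiency I would first treat the $2$-connected case. Given such an $st$-numbering, I assign to each vertex $v$ a horizontal bar at height $\sigma(v)$, and I fix the horizontal extent of each bar using a companion left-to-right numbering of the inner faces coming from the dual bipolar orientation. The verification splits into two halves: each edge $uv$ of $G$ must yield an unobstructed vertical channel of positive width between the bars of $u$ and $v$, and conversely non-adjacent vertices must be mutually blocked by a bar of an intervening vertex on the face that separates them. For the general case I would process the block--cut tree: each block is $2$-connected or a single edge and so has a representation, and the hypothesis that all cut-vertices lie on one face $F$ lets me realize $F$ as the common unbounded region, lay the blocks out in disjoint horizontal ranges, and glue the blocks sharing a cut-vertex $c$ by identifying (and horizontally extending) their bars for $c$, so that distinct blocks see each other only through shared cut-vertex bars.

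For necessity, given a bar visibility representation of $G$, I would first extract a planar embedding: place a representative point on each bar and route each edge through the vertical channel that witnesses its visibility. Since the channels have positive width and the visibilities above (respectively below) a fixed bar occur in a well-defined left-to-right order, these curves can be drawn without crossings, so $G$ is planar, and the arrangement has a natural unbounded region giving a distinguished outer face. It then remains to show every cut-vertex lies on this face: if $c$ is a cut-vertex, deleting its bar disconnects the visibility structure into the parts of $G-c$, forcing those parts into disjoint horizontal ranges separated by the ends of the bar of $c$, and tracing the unbounded region shows $c$ is exposed to it.

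The hard part will be the $2$-connected construction: choosing the horizontal extents so that the realized visibilities are \emph{exactly} the edges of $G$, with no spurious adjacency and with every channel of genuinely positive width, which is precisely where planar $st$-graph duality carries the load. The second delicate point is the gluing in the general case and the matching ``cut-vertices lie on one face'' argument in the necessity direction; since both rest on the same separation property of a cut-vertex, I would isolate that property as a lemma and invoke it in both places.
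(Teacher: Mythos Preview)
The paper does not prove this theorem; it is quoted from \cite{Tamassia86,Wismath85} as a known characterization and used as a black box. So there is no proof in the paper to compare your proposal against.

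That said, your sketch follows the classical Tamassia--Tollis approach via planar $st$-graphs and $st$-numberings, which is indeed the standard route. One point in your necessity argument deserves more care: you assert that for a cut-vertex $c$ the components of $G-c$ are forced into \emph{disjoint horizontal ranges} separated by the ends of the bar for $c$. This is not automatic, since components could in principle be separated vertically by the bar of $c$ rather than horizontally; you need to argue (e.g., by a leftmost/rightmost analysis or by showing that some endpoint of the bar for $c$ is visible from infinity) that in every case $c$ still touches the unbounded region of the induced planar drawing. Similarly, in the sufficiency direction your gluing step should explicitly use that \emph{all} cut-vertices lie on a \emph{single} face, not merely that each one lies on some face, since the block--cut tree may be a nontrivial tree and the layout of blocks in disjoint horizontal strips must be globally consistent.
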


As a generalization of bar visibility representation,
the $t$-bar visibility representation of a graph was introduced by
Chang, Hutchinson, Jacobson, Lehel and West \cite{Chang04} extended bar
representations to all graphs.  A \textit{ $t$-bar representation}
of a graph assigns each vertex of at most $t$ horizontal bars in the plane
so that two vertices $u$ and $v$ are adjacent if and only if some bar for $u$
and some bar for $v$ can see each other along an unobstructed vertical channel
of positive width.  The least $t$ such that $G$ has a $t$-bar visibility
representation is the \textit{bar visibility number} of $G$, denoted by $b(G)$.
Bar visibility graphs are the graphs $G$ such that $b(G)=1$.
Chang et al.\ studied the bar visibility number for planar graphs, the complete
graph $K_n$, the complete bipartite graph $K_{m,n}$, and graphs with $n$
vertices.

\begin{theorem}[\cite{Chang04}]\label{4}
Every planar graph has a $2$-bar visibility representation in which every
non-cut-vertex is assigned only one bar.
\end{theorem}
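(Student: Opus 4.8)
The plan is to derive the theorem from the characterization in Theorem~\ref{3}, by cutting $G$ apart at its cut-vertices into pieces each of which has at most one cut-vertex and is therefore a bar visibility graph, and then reassembling the resulting $1$-bar representations side by side in the plane. Each vertex of $G$ will get one bar for every piece that contains a copy of it, so the whole task is to perform the cutting so that every cut-vertex of $G$ ends up in exactly two pieces while every non-cut-vertex ends up in exactly one.

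First I would dispose of the easy reductions: if $G$ is disconnected, handle its components separately and place their drawings in disjoint vertical strips; and if $G$ has no cut-vertex, then the condition in Theorem~\ref{3} is vacuous and $G$ already has a $1$-bar representation. So assume $G$ is connected and has a cut-vertex. Form the block-cut tree $T$ of $G$ and root it at a block that is a leaf of $T$; every cut-vertex $v$ of $G$ then has a unique parent block $p(v)$ in $T$ together with one or more child blocks. Split each cut-vertex $v$ into two vertices $v'$ and $v''$, where $v'$ inherits exactly the edges at $v$ lying in $p(v)$ and $v''$ inherits the remaining edges at $v$ (those lying in the child blocks of $v$). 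Let $\widehat G$ be the resulting graph and call its connected components the \emph{pieces}.

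The combinatorial core of the argument is that each piece has at most one cut-vertex. I would check this by reading the pieces off from the rooted tree: after the splitting, each block $B$ of $G$ retains its internal structure but has its parent cut-vertex $w$ renamed to the copy $w''$ and each of its child cut-vertices $x$ renamed to the copy $x'$, and two such relabelled blocks lie in one piece precisely when the original blocks are siblings in $T$ (they share a parent cut-vertex). Hence the pieces are exactly: the root block by itself, which is a relabelled copy of a block of $G$ and so has no cut-vertex; and, for each cut-vertex $v$ of $G$, the union of the child blocks of $v$ glued at the single vertex $v''$, in which the only vertex that could disconnect the piece is $v''$, since every other articulation-type vertex occurring there is a copy $x'$ lying in just one of the constituent blocks. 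Each piece is also planar, being isomorphic to the subgraph of $G$ induced on the vertices of the blocks composing it; and a planar graph with at most one cut-vertex trivially has an embedding in which all of its cut-vertices lie on one face, so Theorem~\ref{3} assigns each piece a $1$-bar visibility representation.

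Finally I would glue the pieces back together, placing the chosen $1$-bar representations in pairwise disjoint vertical strips of the plane. Bars in different strips have disjoint horizontal ranges, hence see nothing of each other, while bars in a common strip see each other exactly as in that piece's representation; since every edge of $G$ lies in a unique block and hence in a unique piece, and a non-edge of $G$ never becomes an edge inside any piece, two vertices of $G$ are adjacent if and only if some bar of one sees some bar of the other. Every non-cut-vertex of $G$ lies in a single block, hence a single piece, and so gets exactly one bar; every cut-vertex $v$ lies in exactly two pieces -- the one containing $p(v)$ and the one built from the child blocks of $v$, which are distinct -- and so gets exactly two bars, yielding the desired $2$-bar representation. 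I expect the real work to be precisely this bookkeeping: verifying that the ``parent block versus child blocks'' split keeps every piece at a single cut-vertex, and checking that placing the pieces in separate vertical strips introduces no unwanted visibilities. Once those are established, the theorem follows from Theorem~\ref{3}.
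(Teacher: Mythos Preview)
The paper does not prove Theorem~\ref{4}; it is quoted from Chang et al.~\cite{Chang04} and invoked as a black box in the proof of Theorem~\ref{2}. So there is no in-paper argument against which to compare your proposal.

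That said, your block--cut-tree decomposition is sound and is the natural way to derive the statement from Theorem~\ref{3}. The key step you isolate --- rooting the block--cut tree at a leaf block and splitting each cut-vertex $v$ into a copy $v'$ carrying only the edges in the parent block and a copy $v''$ carrying the edges in the child blocks --- does produce connected pieces each of which is a union of sibling blocks glued at a single vertex, hence has at most one cut-vertex. Such a planar piece trivially satisfies the hypothesis of Theorem~\ref{3}, since any vertex lies on the boundary of some face in any planar embedding. Placing the resulting $1$-bar representations in pairwise disjoint vertical strips creates no cross-strip visibilities and preserves all within-strip ones, and your bookkeeping that every edge of $G$ lies in a unique block (hence a unique piece) while non-edges never appear in any piece completes the correctness check. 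Each non-cut-vertex lies in one piece and each cut-vertex in exactly two, giving the claimed $2$-bar representation. There is no gap.
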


\begin{theorem}[\cite{Chang04}]\label{5}
For $n \geq 7$, $b(K_n) = \CL{n/6}$.
\end{theorem}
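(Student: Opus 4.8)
The plan is to prove the lower and upper bounds on $b(K_n)$ separately, both through the \emph{planarization} of a bar representation. For the lower bound, suppose $K_n$ has a $t$-bar representation $R$, and let $P$ be the graph whose vertices are the bars of $R$, with two bars joined whenever they see each other through an unobstructed channel. Then $R$ is a $1$-bar representation of $P$, so by Theorem~\ref{3} the graph $P$ is planar, and it has at most $tn$ vertices. For each of the $\binom n2$ edges $uv$ of $K_n$, some bar of $u$ sees some bar of $v$, which yields an edge of $P$ running between the class $\mathcal B_u$ of bars of $u$ and the class $\mathcal B_v$ of bars of $v$; since the unordered pair of classes that an edge of $P$ meets determines which edge of $K_n$ it came from, these $\binom n2$ edges of $P$ are distinct. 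Hence $\binom n2\le|E(P)|\le 3|V(P)|-6\le 3tn-6$, so $t\ge \frac{n-1}{6}+\frac2n$, and an elementary check on the residue of $n$ modulo $6$ shows that for $n\ge 7$ this forces $t\ge\CL{n/6}$; thus $b(K_n)\ge\CL{n/6}$.

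For the upper bound it suffices to construct, for each integer $t\ge 2$, a $t$-bar representation of $K_{6t}$: given $n$ with $6(t-1)<n\le 6t$ (so that $\CL{n/6}=t$), deleting $6t-n$ vertices together with all of their bars from such a representation leaves a $t$-bar representation of $K_n$, because erasing bars can only enlarge the set of mutually visible bars, which is harmless inside a complete graph. To represent $K_{6t}$, it is in turn enough to exhibit a planar graph $P_0$ on at most $6t^2$ vertices, partitioned into $6t$ classes of size at most $t$, with at least one edge between every two classes. Indeed, one may then add edges inside a planar embedding of $P_0$ until it is $2$-connected — this changes neither the classes nor the property that every two of them are joined, since adding edges cannot remove one — and a $2$-connected planar graph is a bar visibility graph by Theorem~\ref{3} (it has no cut-vertex, so the condition there is vacuous); reading its bar representation with the classes of $P_0$ gives a representation with at most $t$ bars per vertex in which every pair of vertices is adjacent, i.e.\ a $t$-bar representation of $K_{6t}$. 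Equivalently, one wants to \emph{split} each vertex of $K_{6t}$ into at most $t$ copies, distributing its incident edges among the copies, so that the resulting graph — on $6t^2$ vertices, still with $\binom{6t}2$ edges — is planar.

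The heart of the matter is this last construction, and I expect it to be the main obstacle. It is essentially tight: $\binom{6t}2=3\cdot 6t^2-3t$ lies only $3t-6$ below the triangulation bound $3\cdot 6t^2-6$, so the split graph must be almost a triangulation — for instance, at $t=2$ one needs a genuine plane triangulation on $24$ vertices together with a perfect matching disjoint from its edge set whose contraction is $K_{12}$. I would build it by an explicit modular scheme, laying out the drawing from a few highly symmetric planar pieces so that the $t$ bars assigned to each vertex are spread across the pieces in such a way that, between them, they reach every one of the other $6t-1$ classes; the number $6$ emerges because in a near-triangulation a vertex split into $t$ parts can be made incident to about $6t$ other parts, which is why $K_{6t}$ is the natural target and $\CL{n/6}$ the resulting value. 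Once this construction is in hand, the two bounds combine to give $b(K_n)=\CL{n/6}$ for $n\ge 7$.
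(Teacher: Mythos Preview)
Your outline matches the approach the paper attributes to~\cite{Chang04}: the lower bound via Euler's formula on the planarization of a $t$-bar layout, and the upper bound via a planar $\CL{n/6}$-split of $K_n$. The lower-bound computation is correct, and your reduction of the upper bound to the existence of such a split (add edges to make the split $2$-connected, then invoke Theorem~\ref{3}) is sound, since in a complete graph spurious extra visibilities are harmless.

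The gap is exactly where you flag it: you do not supply the construction of a planar $t$-split of $K_{6t}$, and this is the entire content of the upper bound. Saying you ``would build it by an explicit modular scheme'' and that ``the number $6$ emerges because in a near-triangulation a vertex split into $t$ parts can be made incident to about $6t$ other parts'' is heuristic, not a proof; the average-degree count explains why $6t$ is the right target but gives no mechanism for realizing all $\binom{6t}{2}$ cross-class adjacencies simultaneously in the plane. The paper itself does not reprove this step either---it cites the Ringel--Jackson solution of Heawood's empire problem~\cite{RJ} and Wessel's short version~\cite{Wes} as the source of the $\CL{n/6}$-split used in~\cite{Chang04}. So to complete your argument you should either reproduce one of those constructions or cite them; without that, the upper bound is unproved.
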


\begin{theorem}[\cite{Chang04}]\label{kmn}
For $m,n\in\NN$, $r\le b(K_{m,n})\le r+1$, where $r=\CL{\FR{mn+4}{2m+2n}}$.
\end{theorem}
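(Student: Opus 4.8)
The plan is to establish the lower bound $r\le b(K_{m,n})$ by an Euler-formula counting argument and the upper bound $b(K_{m,n})\le r+1$ by an explicit layered (``brick wall'') construction. Write $A,B$ for the parts of $K_{m,n}$, with $|A|=m$ and $|B|=n$, and assume $m+n\ge 3$ (for $m=n=1$ the graph is a single edge and the statement is to be read as $b(K_{1,1})=1$).

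\textbf{Lower bound.} Consider a $t$-bar visibility representation of $K_{m,n}$. Let $\mathcal A$ and $\mathcal B$ be the sets of bars used for the vertices of $A$ and of $B$, so $|\mathcal A|\le tm$ and $|\mathcal B|\le tn$, and let $\Gamma$ be the bipartite graph on $\mathcal A\cup\mathcal B$ in which a bar of $\mathcal A$ is joined to a bar of $\mathcal B$ exactly when they see each other. Since every bar belongs to a unique vertex, sending each edge $a_ib_j$ of $K_{m,n}$ to a bar-pair that witnesses it defines an injection $E(K_{m,n})\to E(\Gamma)$, so $|E(\Gamma)|\ge mn$. On the other hand, drawing each visibility inside its vertical channel realizes the visibility graph of the representation, and hence its subgraph $\Gamma$, as a planar graph; moreover $\Gamma$ is simple, since visibility is a binary relation on bars. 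Applying the bound $|E|\le 2|V|-4$ for simple bipartite planar graphs on at least three vertices to $\Gamma$,
\[
  mn\;\le\;|E(\Gamma)|\;\le\;2\bigl(|\mathcal A|+|\mathcal B|\bigr)-4\;\le\;2t(m+n)-4 .
\]
Hence $t\ge\FR{mn+4}{2m+2n}$, and integrality gives $t\ge r$.

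\textbf{Upper bound.} It is enough to build an $(r+1)$-bar visibility representation of $K_{m,n}$; I may assume $2\le m\le n$, since $K_{1,n}$ is a star with $b=1$. A bound obtained merely from the fact that $K_{m,n}$ sits inside $K_{m+n}$ is far too weak when $m$ is small, so a direct construction is required. The idea is to place the bars in horizontal layers, alternating ``$A$-layers'' carrying bars of $A$-vertices with ``$B$-layers'' carrying bars of $B$-vertices, so that every visibility runs between an $A$-layer and an adjacent $B$-layer. Each vertex receives $r+1$ bars, one on each of $r+1$ of its layers. On a $B$-layer the bars are laid side by side (no overlaps), unit width, over $n$ consecutive ``slots''; on the flanking $A$-layers the bars are again laid side by side but wider, and shifted by half a slot relative to the $B$-layers, so that each $A$-bar sees about two $B$-bars above it and about two below it, matching the average degree $\approx 4$ of an edge-maximal bipartite planar graph, which the lower bound shows is exactly the efficiency needed. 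The heart of the construction is then a careful bookkeeping scheme assigning bars to the $m+n$ vertices, deciding for each layer which $A$-vertices appear on it and in which slot ranges and cyclically shifting this assignment from layer to layer, so arranged that every $A$-vertex has at least one bar adjacent to at least one bar of every $B$-vertex. The $(r+1)$st bar per vertex supplies the slack to absorb the rounding in $r=\CL{\cdot}$ and the irregularity of the topmost and bottommost layers.

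\textbf{Main obstacle.} The delicate point is that the representation must realize \emph{exactly} $K_{m,n}$. Producing the $mn$ required edges is tight: each $A$-vertex must, using only its roughly $4(r+1)\approx n$ available visibility slots, reach all $n$ distinct $B$-vertices, so the bookkeeping scheme above is essentially a combinatorial design, and choosing it correctly — with a short case analysis on the residues of $m$ and $n$ modulo $r+1$ — is the real work. Equally, one must verify that no unwanted visibility arises: no two bars of the same vertex, and in particular no two bars of distinct vertices in the same part, may see each other. This forces the bars on any single $A$-layer to cover all $n$ slots with no gap of positive width, so that a $B$-bar on one $B$-layer cannot see a $B$-bar two layers away, and symmetrically for $B$-layers. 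Reconciling this ``no-gap'' requirement with the half-slot offsets and with the boundary layers, without ever opening a stray channel, is where the construction is subtle; once the layout is fully specified, checking that each prescribed adjacency is present is routine.
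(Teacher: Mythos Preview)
This theorem is not proved in the present paper; it is quoted from Chang et al.~\cite{Chang04} as background, with no argument given here. So there is no ``paper's own proof'' to compare against, and I can only assess your proposal on its own terms.

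Your lower bound is correct and is the standard argument: a $t$-bar representation yields a simple bipartite planar graph on at most $t(m+n)$ bar-vertices containing at least $mn$ edges (one witnessing each edge of $K_{m,n}$), and the bipartite planar bound $|E|\le 2|V|-4$ then forces $t\ge (mn+4)/(2m+2n)$, hence $t\ge r$.

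Your upper bound, however, is not a proof but a description of what a proof would have to accomplish. You outline an alternating-layer ``brick wall'' architecture and correctly identify the two constraints it must meet --- every $A$--$B$ pair must be witnessed, and no two distinct same-part vertices may have mutually visible bars --- but the actual assignment of vertices to slots, your ``bookkeeping scheme'' with its promised ``case analysis on the residues of $m$ and $n$,'' is never given. That scheme \emph{is} the content of the upper bound; calling it ``essentially a combinatorial design'' and asserting that checking adjacencies is ``routine once the layout is fully specified'' does not discharge the obligation to specify it. In particular, you have not shown that the no-gap requirement on each layer (needed to block same-part visibilities) can be met simultaneously with hitting all $mn$ cross-part pairs using only $r+1$ bars per vertex. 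The paper itself remarks that even the sharper bound $b(K_{m,n})=r$ required a lengthy proof~\cite{CWY}; the $r+1$ bound of~\cite{Chang04} is easier but still rests on an explicit construction, which your proposal does not provide.
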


\begin{theorem}[\cite{Chang04}]\label{6}
For any graph $G$ on $n$ vertices, $b(G) \leq \CL{n/6}+2$.
\end{theorem}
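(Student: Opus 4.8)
The plan is to reduce the bound to the known value of $b(K_n)$. Since the asserted bound does not depend on the number of edges of $G$, the essential case is that of a dense graph, and $K_n$, being the unique maximal $n$-vertex graph, has its bar visibility number already determined by Theorem~\ref{5}; what is missing is a way to pass from $K_n$ to an arbitrary spanning subgraph. I would therefore isolate as the main lemma the near-monotonicity statement: \emph{if $H$ is a spanning subgraph of $G$, then $b(H)\le b(G)+2$.} Granting the lemma, Theorem~\ref{6} follows at once, because every $n$-vertex graph $G$ is a spanning subgraph of $K_n$: for $n\ge 7$ the lemma and Theorem~\ref{5} give $b(G)\le b(K_n)+2=\CL{n/6}+2$, and for $n\le 6$ the bound reads $b(G)\le 3$, which follows from Theorem~\ref{4} when $G$ is planar and from a short direct check of the few nonplanar graphs on at most six vertices otherwise.

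To prove the lemma I would start from a $t$-bar visibility representation of $G$ with $t=b(G)$. The obstacle is that this representation realizes \emph{every} edge of $G$, including the edges of $E(G)\setminus E(H)$ that must be suppressed, and one cannot simply suppress the visibility between two vertices $u$ and $v$ by dropping a blocking bar into an unwanted vertical channel: such a bar belongs to a third vertex $w$, and it then creates the unwanted visibilities $uw$ and $vw$. Instead I would pass to the plane graph $\Gamma$ associated with the representation, whose vertices are the bars and whose edges are the realized visibilities, together with the partition of $V(\Gamma)$ into $n$ classes $X_1,\dots,X_n$ according to which vertex of $G$ each bar represents. Then $|X_i|\le t$ for every $i$; by Theorem~\ref{3} the graph $\Gamma$ is planar and has an embedding in which every cut-vertex lies on the boundary of a single face; and the graph on $\{X_1,\dots,X_n\}$ in which $X_iX_j$ is an edge precisely when some edge of $\Gamma$ joins $X_i$ to $X_j$ is exactly $G$ (call this the class-quotient of $\Gamma$). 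Deleting from $\Gamma$ every edge joining classes $X_i$ and $X_j$ with $X_iX_j\notin E(H)$ produces a plane graph $\Gamma_0$ that is still planar and whose class-quotient is now exactly $H$; but the deletions may create new cut-vertices or disconnect the graph, so $\Gamma_0$ need no longer satisfy the one-face condition of Theorem~\ref{3}.

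The crux is a repair step: I want to enlarge $\Gamma_0$ to a plane graph $\Gamma_1$ by adjoining at most two new vertices to each class $X_i$, placed along a suitably chosen face of the embedding, together with new edges joining each added vertex only to vertices of its own class or of classes $X_j$ with $X_iX_j\in E(H)$, so that $\Gamma_1$ is planar, has the same class-quotient $H$ as $\Gamma_0$, and \emph{does} admit an embedding with all cut-vertices on one common face. The added vertices act as ``stitches'' that pull the block-cutpoint structure of $\Gamma_0$ onto a single face without disturbing the quotient. Showing that two stitches per class always suffice is the main difficulty: it should come down to a case analysis of how the cut-vertices of $\Gamma_0$ can be distributed among the blocks and faces of its (possibly several) components, and of how the bars of a fixed class $X_i$ can be used to reconnect those pieces, and this is presumably also the place where a sharper argument lowers the constant from $2$ to $1$. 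Once $\Gamma_1$ is built, Theorem~\ref{3} guarantees that it is a bar visibility graph; interpreting a $1$-bar representation of $\Gamma_1$ through the partition into classes of size at most $t+2$ yields a $(t+2)$-bar visibility representation of $H$, which gives $b(H)\le b(G)+2$ and completes the argument.
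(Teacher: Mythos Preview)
Your high-level plan---reduce Theorem~\ref{6} to Theorem~\ref{5} via a near-monotonicity lemma of the form ``$b(H)\le b(G)+c$ for spanning subgraphs''---is exactly the paper's strategy. The divergence, and the gap, is in how you prove the lemma.

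You correctly pass from a $t$-bar representation of $G$ to a planar $t$-split $\Gamma$, and then delete edges to obtain a planar $t$-split $\Gamma_0$ of $H$; the difficulty, as you note, is that $\Gamma_0$ may have cut-vertices scattered in a way that obstructs Theorem~\ref{3}. Your proposed fix is to \emph{add} up to two new vertices per class $X_i$, together with carefully placed edges, so as to pull all cut-vertices onto a common face. But you do not carry this out: you describe it as ``the main difficulty'' and gesture at a case analysis without giving one. It is not clear that two stitches per class suffice, nor how you would place the new edges so that (i) planarity is preserved, (ii) the class-quotient stays equal to $H$ (the new edges must join only vertices in the same or $H$-adjacent classes), and (iii) the resulting block--cutpoint structure collapses to one face. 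This is a genuine missing idea, not merely a missing detail.

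The paper's approach replaces your additive repair with a \emph{rearrangement} that costs nothing in class size. Given two cut-vertices $u_1,u_2$ of $\Gamma_0$ lying in the same class $S(u)$, the paper performs the $u_1,u_2$-transfer: every edge from $u_1$ into a component of $\Gamma_0-u_1$ not containing $u_2$ is rerouted to $u_2$. Lemma~\ref{1} shows this preserves planarity, leaves the class-quotient equal to $H$, and strictly decreases the number of cut-vertices in $S(u)$. Iterating, one reaches a planar $t$-split $H^*$ of $H$ with at most one cut-vertex per class. Now apply Theorem~\ref{4} to $H^*$: only cut-vertices receive two bars, so each class uses at most $(t-1)\cdot 1+1\cdot 2=t+1$ bars. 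This gives $b(H)\le t+1$, which already beats the $t+2$ you were aiming for and yields the paper's improvement $b(G)\le\CL{n/6}+1$. The transfer operation is precisely the tool your sketch lacks; it sidesteps the face-placement problem entirely by reducing cut-vertices rather than absorbing them.
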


The upper bound in Theorem~\ref{kmn} has been improved by Cao, West, and
Yang~\cite{CWY} via a lengthy proof, yielding $b(K_{m,n})=r$ for all $(m,n)$.
In \cite{Chang04}, the authors conjectured that $b(G) \leq \CL{n/6}$ when $G$
has $n$ vertices; this must be restricted to $n\ge7$ due to $K_5$ and $K_6$.
We strengthen Theorem~\ref{6} toward the conjecture by giving a short proof
that $b(G)\le \CL{n/6}+1$ for every $n$-vertex graph $G$.

Our approach is to combine Theorem~\ref{5} with a bound on the bar visibility
number of subgraphs.  We prove $b(H)\leq b(G)+1$ whenever $H$ is a spanning
subgraph of $G$.  Equality can hold, but we do not know whether equality can
hold when $G=K_n$ with $n\ge7$.  Hence the conjecture about the extremal value
for $n$-vertex graphs remains open.

In fact, our technical result is a slightly stronger statement.
A {\it $t$-split} of a graph $G$ is a graph $G'$ in which each vertex is
replaced by a set of at most $t$ independent vertices in such a way that $u$
and $v$ are adjacent in $G$ if and only if some vertex in the set representing
$u$ is adjacent in $G'$ to some vertex in the set representing $v$.  The
{\it planar splitting thickness}  (or simply {\it split thickness}) of a graph
$G$, which we denote by $\sigma(G)$, is the minimum $t$ such that $G$ has a
$t$-split that is a planar graph.  The graph obtained from a $t$-bar
representation of $G$ by adding edges joining bars to record visibilities and
then shrinking each bar to a vertex is a $t$-split of $G$.  Hence
$\sigma(G)\le b(G)$.  We prove that always $b(H)\le \sigma(G)+1$ when $H$
is a spanning subgraph of $G$.  Note that setting $H=G$ yields
$\sigma(G)\le b(G)\le \sigma(G)+1$.  We have $b(G)=\sigma(G)$ if and only if
$G$ has a $\sigma(G)$-split in which all cut-vertices lie on one face.

The name ``planar splitting thickness'' was introduced in
Eppstein et al.~\cite{Epp}.  The concept was originally studied for complete
graphs by Heawood~\cite{Hea} in 1890, under the name ``$m$-pire problem''.
Ringel and Jackson~\cite{RJ} proved in effect that $K_n$ has a
$\CL{n/6}$-split.  A short proof of this by Wessel~\cite{Wes} was used
in~\cite{Chang04} to prove $b(K_n)=\CL{n/6}$ (Theorem~\ref{5} above).

\section{The transfer operation}

We introduce an operation that we will use to reduce the number of cut-vertices
in a graph.  Let $N_G(u)$ denote the neighborhood of a vertex $u$ in a graph
$G$, meaning the set of vertices adjacent to $u$.

\begin{definition}
Given a cut-vertex $u$ and another vertex $v$ in a graph $G$, the
{\it $u,v$-transfer} of $G$ is the graph $\wG$ obtained as follows: for every
$w\in N_G(u)$ such that $w$ is not in the component of $G-u$ containing $v$,
replace the edge $uw$ with the edge $vw$.
\end{definition}

A {\it $v$-lobe} of a graph $G$ is a subgraph of $G$ induced by the vertex set
consisting of $v$ and a component of $G-v$.  If $v$ is a vertex in a graph $G$,
then $G$ is planar if and only if every $v$-lobe of $G$ is planar.  This uses
the fact that if $F$ is the set of edges bounding some face in a planar
embedding of a graph $G$, then $G$ has an embedding in which $F$ is the set of
edges incident to the unbounded face.

\begin{lemma}\label{1}
Let $u$ be a cut-vertex in a planar graph $G$.  If $\wG$ is the
$u,v$-transfer of $G$, where $v\in V(G)-\{u\}$, then $\wG$ is planar,
$u$ is not a cut-vertex of $\wG$, and a vertex outside $\{u,v\}$ is a
cut-vertex of $\wG$ if and only if it is a cut-vertex of $G$.
\end{lemma}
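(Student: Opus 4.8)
The plan is to give an explicit description of $\wG$ in terms of the $u$-lobes of $G$ and then read off all three assertions from that description. We may assume $G$ is connected, since the transfer affects only the component of $G$ containing $u$ (and when $v$ lies in a different component the argument is the same, with $u$ becoming isolated in $\wG$). Let $L_0,L_1,\dots,L_k$ be the $u$-lobes of $G$, indexed so that $v\in L_0$; since $u$ is a cut-vertex we have $k\ge1$, and since $G$ is connected each $L_i$ is connected. Every transferred edge $uw$ has $w$ outside the component of $G-u$ containing $v$, so $w$ lies in some $L_i$ with $i\ge1$, while no edge of $L_0$ is disturbed. Hence $\wG$ is obtained from $L_0$ by attaching, for each $i\ge1$, a copy $L_i'$ of $L_i$ in which the copy of $u$ has been renamed $v$; the pieces $L_0,L_1',\dots,L_k'$ pairwise meet only in $v$, and $\wG=L_0\cup L_1'\cup\dots\cup L_k'$.

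Planarity is then immediate. Since $G$ is planar, the lobe fact says every $L_i$ is planar, hence every $L_i'$ is planar. Deleting $v$ from $\wG$ separates the pieces and then breaks each piece into its own $v$-lobes, so every $v$-lobe of $\wG$ is a $v$-lobe of one of $L_0,L_1',\dots,L_k'$ and is therefore planar; applying the lobe fact to $\wG$ and $v$ shows $\wG$ is planar. For the status of $u$: after the transfer $N_{\wG}(u)\subseteq V(L_0)$, and $L_0-u$ is the component of $G-u$ containing $v$, which is connected and contains $v$; since each $L_i'$ with $i\ge1$ is attached to it at $v$, the graph $\wG-u$ is connected, so $u$ is not a cut-vertex of $\wG$.

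The last assertion is the substantive part. I will use the elementary observation that if a graph is the union of connected subgraphs $A_1,\dots,A_r$ meeting pairwise in a single common vertex $z$, then a vertex $x\ne z$ is a cut-vertex of the union if and only if it is a cut-vertex of the unique $A_j$ containing it: deleting $x$ leaves the subgraphs not containing $x$ joined together through $z$, together with the components of $A_j-x$, and exactly one of those components (the one containing $z$) meets the joined part, so the union becomes disconnected precisely when $A_j-x$ does. Apply this to $G=L_0\cup\dots\cup L_k$ (common vertex $u$) and to $\wG=L_0\cup L_1'\cup\dots\cup L_k'$ (common vertex $v$). Given $x\notin\{u,v\}$, let $L_j$ be the unique $u$-lobe containing $x$. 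If $j=0$, the piece of $G$ containing $x$ and the piece of $\wG$ containing $x$ are the identical graph $L_0$, so $x$ is a cut-vertex of $G$ iff it is one of $L_0$ iff it is one of $\wG$. If $j\ge1$, the relevant pieces are $L_j$ and its relabelled copy $L_j'$, and the relabelling moves only $u$, which differs from $x$; hence $x$ is a cut-vertex of $L_j$ iff of $L_j'$, and therefore of $G$ iff of $\wG$.

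I expect the bookkeeping in this final step — keeping straight which piece a given vertex belongs to, and checking that the relabelling from $L_j$ to $L_j'$ genuinely fixes every vertex other than $u$ (so in particular $x$) — to be the only point needing care; the planarity of $\wG$ and the fact that $u$ is no longer a cut-vertex fall out directly from the lobe structure.
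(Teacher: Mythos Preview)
Your proof is correct and takes essentially the same lobe-decomposition approach as the paper: both identify the $u$-lobes of $G$, observe that $\wG$ is obtained by regluing them (with $u$ relabelled to $v$ in all but $L_0$) at the vertex $v$, and read off planarity and the cut-vertex claims from this structure. Your handling of the final assertion, via the explicit observation about cut-vertices in a one-point union of connected subgraphs, is if anything more carefully argued than the paper's brief appeal to a correspondence of $w$-lobes.
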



\begin{proof}
Paths in $G$ correspond to paths through the same vertices in $\wG$, with
the possible substitution of $v$ for $u$, especially when the paths join
vertices from different $u$-lobes.  Hence the $v$-lobes of $\wG$ are isomorphic
to the $u$-lobes of $G$.  Since $G$ is planar, its $u$-lobes are planar, and
then their combination as $v$-lobes makes $\wG$ planar.

Vertex $u$ is not a cut-vertex of $\wG$, since in $\wG$ all its neighbors are
in the component of $G-u$ containing $v$, which remains a connected subgraph
of $G$.  Note that if $v$ and $u$ are not in the same component of $G$,
then $u$ is isolated in $\wG$.  Also, $v$ is a cut-vertex of $\wG$.

A vertex $w$ is a cut-vertex if and only if its graph has more than one
$w$-lobe.  If $w$ is a cut-vertex in $G$ outside $\{u,v\}$, then the $w$-lobes
of $G$ correspond to $w$-lobes in $\wG$, except that the $w$-lobe containing
$u$ in $G$ may differ from the $w$-lobe containing $v$ in $\wG$.  In
particular, there is more than one $w$-lobe in $G$ if and only if there is
more than one $w$-lobe in $\wG$.
\end{proof}

\section{Application to visibility number}

We note first that it is possible for a graph to have smaller bar visibility
number than a spanning subgraph.  Let $H$ be the graph obtained from $K_{3,3}$
by replacing one edge with a path of length $3$ and then deleting the middle
edge of the path (on the left in Figure~1).  Note that $H$ is planar, but it
has no planar embedding where the cut-vertices lie on the same face.
Letting $C$ be the $4$-cycle $[a,b,g,h]$, the claim holds because deleting the
vertices of $C$ leaves the cut-vertices of $H$ in components whose vertices of
attachment to $C$ alternate along $C$.  Hence one must be embedded inside and
one outside.  Thus $b(H)\ge2$; in fact, equality holds.  On the other hand, we
can obtain a planar supergraph $G$ having no cut-vertices by adding one edge at
each vertex of degree $1$ (on the right in Figure~1).  Hence $b(H)=b(G)+1$.

\begin{figure}[htbp]
\begin{center}
\begin{tikzpicture}[scale = 1.00]
	\begin{scope}[xshift = -6cm]
	\coordinate (a) at (-0.6,0.8);		\coordinate (b) at (0.6,0.8);
	\coordinate (c) at (-0.4,0);		\coordinate (d) at (-1.2,0);
	\coordinate (e) at (1.2,0);		\coordinate (f) at (0.4,0);
	\coordinate (g) at (-0.6,-0.8);		\coordinate (h) at (0.6,-0.8);

	\node at (-0.75,0.97) {$a$};		\node at (0.75,1.00) {$b$};
	\node at (-.35,-0.2) {$c$};		\node at (-1.4,0.05) {$d$};
	\node at (1.4,0) {$e$}; 		\node at (0.28,0.16) {$f$};
	\node at (-0.55,-1.07) {$g$};		\node at (0.73,-1) {$h$};
	
	\filldraw[thick, fill = black]{
		(a) circle(1.4pt) 			(b) circle(1.4pt)
		(c) circle(1.4pt)			(d) circle(1.4pt)
		(e) circle(1.4pt)			(f) circle(1.4pt)
		(g) circle(1.4pt)			(h) circle(1.4pt)
	};

	\draw{
			(d) -- (a) -- (b) -- (e) -- (h) -- (g) -- cycle
		 	(c) -- (d)		(e) -- (f)
			(a) -- (h)
	};
	\draw (b) .. controls (0.6, 2.2) and (-2.4, 1.8) .. (-2.4,0) .. controls (-2.4, -0.8) and (-1.2, -1.2) .. (g);
	\end{scope}

	\begin{scope}
	\coordinate (a) at (-0.6,0.8);		\coordinate (b) at (0.6,0.8);
	\coordinate (c) at (-0.4,0);		\coordinate (d) at (-1.2,0);
	\coordinate (e) at (1.2,0);		\coordinate (f) at (0.4,0);
	\coordinate (g) at (-0.6,-0.8);		\coordinate (h) at (0.6,-0.8);

	\node at (-0.75,0.97) {$a$};		\node at (0.75,1.00) {$b$};
	\node at (-.35,-0.2) {$c$};		\node at (-1.4,0.05) {$d$};
	\node at (1.4,0) {$e$}; 		\node at (0.28,0.16) {$f$};
	\node at (-0.55,-1.07) {$g$};		\node at (0.73,-1) {$h$};
	
	\filldraw[thick, fill = black]{
		(a) circle(1.4pt) 			(b) circle(1.4pt)
		(c) circle(1.4pt)			(d) circle(1.4pt)
		(e) circle(1.4pt)			(f) circle(1.4pt)
		(g) circle(1.4pt)			(h) circle(1.4pt)
	};

	\draw{
			(d) -- (a) -- (b) -- (e) -- (h) -- (g) -- cycle
		 	(c) -- (d)		(e) -- (f)
			(a) -- (h)
			(c) -- (a)		(h) -- (f)
	};
	\draw (b) .. controls (0.6, 2.2) and (-2.4, 1.8) .. (-2.4,0) .. controls (-2.4, -0.8) and (-1.2, -1.2) .. (g);
	\end{scope}
\end{tikzpicture}
\end{center}

\vspace{-1pc}
\caption{Graphs $H$ and $G$.}
\label{fig:3}
\end{figure}
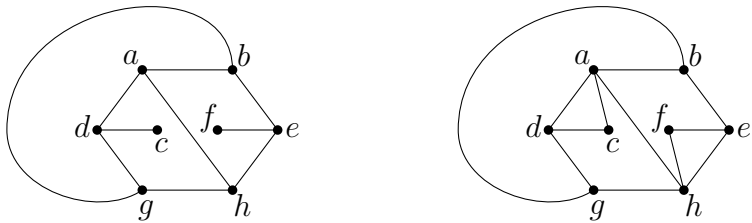

We do not know whether for $k>1$ there is a graph $G$ with $b(G)=k$ having a
spanning subgraph $H$ with $b(H)=k+1$.


Recall that $\sigma(G)$ is the split thickness of $G$.

\begin{theorem}\label{2}
If $H$ is a spanning subgraph of a graph $G$, then $b(H) \leq \sigma(G)+1$.
\end{theorem}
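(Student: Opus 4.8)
The plan is to construct a planar $(t+1)$-split of $H$, where $t=\sigma(G)$, in which all cut-vertices lie on the boundary of a single face. By Theorem~\ref{3} such a split has a bar visibility representation, and reinterpreting that representation — assigning to each vertex of $H$ the bars that were assigned to the vertices of its part — yields a $(t+1)$-bar representation of $H$, so that $b(H)\le t+1$. The translation is immediate: distinct parts are independent sets, and two parts are joined by an edge exactly when the corresponding vertices of $H$ are adjacent, so adjacency of bar-sets recovers adjacency in $H$.

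First I would pass from $G$ to $H$. Fix a planar $t$-split $G'$ of $G$; for each edge $uv$ of $H$ retain one edge of $G'$ joining the part of $u$ to the part of $v$, and let $H'$ be the subgraph of $G'$ formed by the retained edges. Since the partition into parts is unchanged, $H'$ is a $t$-split of $H$, and it is a spanning subgraph of $G'$, hence planar. So it is enough to prove that any planar $t$-split of $H$ can be converted into a planar $(t+1)$-split of $H$ with all cut-vertices on one face. To gain room, add one new vertex to each part (adjacent either to nothing, or to a single already-present neighbour of that part, as convenient below); this preserves planarity and the property of being a split of $H$, and now every part has at most $t+1$ vertices.

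The main tool is then the transfer operation of Lemma~\ref{1}, applied only with both vertices in the same part $S$: if $w$ is a cut-vertex of the current graph and $w'$ is another vertex of $S$, the $w,w'$-transfer keeps us among planar $(t+1)$-splits of $H$. Indeed, Lemma~\ref{1} supplies planarity and tells us exactly which vertices remain cut-vertices; $S$ stays independent because every neighbour of $w$ lies outside $S$, so replacing an edge $wx$ by $w'x$ never creates an edge inside $S$; and no between-part adjacency changes, because $wx$ with $x$ in some part $S_v$ is replaced by $w'x$ with $w'\in S$ and $x\in S_v$, joining the same pair of parts. Now, whenever a part holds two cut-vertices of the current graph, transferring one of them into the component of the other strictly decreases the total number of cut-vertices, by Lemma~\ref{1}; iterating, we may assume every part holds at most one cut-vertex. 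For the cut-vertices that survive, I would use transfers to redistribute the lobes of each such cut-vertex among the vertices of its part (the extra vertex providing room even when a part started as a single vertex), and choose a compatible planar embedding — available since by Lemma~\ref{1} a transfer only relabels the lobe structure — so that all remaining cut-vertices come to lie on one common face; Theorem~\ref{3} then finishes the argument. When $t=1$ this recovers Theorem~\ref{4}.

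The step I expect to be the real obstacle is this last one: concentrating the surviving cut-vertices onto a single face. One extra vertex per part cannot in general destroy all cut-vertices — a vertex of $H$ may split into fragments meeting arbitrarily many blocks, so a $(t+1)$-split of $H$ with no cut-vertices need not exist — and, worse, a within-part transfer onto a freshly added vertex only relabels, so even cleaning up a part with a single cut-vertex needs a genuinely global idea. The natural approach is to root the block structure and, in every transfer, always retain on the transferred vertex the lobe pointing toward the root, so that in the final graph the block-cut structure is simple enough (for instance, all cut-vertices lying in one block, or the blocks forming a chain) for an embedding witnessing Theorem~\ref{3} to exist; pinning this down, and reconciling it with the constraint that transfers act within a single part, is where the work lies. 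Everything else — the reduction from $G$ to $H$, the room-making, and the verification that within-part transfers preserve planarity and the split-of-$H$ property — is routine given Lemma~\ref{1}.
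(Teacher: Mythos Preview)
Your reduction from $G$ to a planar $t$-split $H'$ of $H$ is correct, and so is the use of within-part transfers (via Lemma~\ref{1}) to reach a planar $t$-split $H^*$ in which each part contains at most one cut-vertex. That part matches the paper exactly.

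The gap is what comes next. You try to push further --- adding a spare vertex to every part and then redistributing lobes so that all surviving cut-vertices end up on a single face --- and you yourself flag this as the real obstacle, with no argument given. In fact this detour is unnecessary, and the paper avoids it entirely: once you have $H^*$ with at most one cut-vertex per part, simply apply Theorem~\ref{4} to the planar graph $H^*$ as a black box. Theorem~\ref{4} produces a $2$-bar visibility representation of $H^*$ in which every non-cut-vertex gets one bar and every cut-vertex gets at most two. Since each part $S(u)$ has size at most $t$ and contains at most one cut-vertex, the total number of bars assigned to vertices of $S(u)$ is at most $t+1$. Collecting these bars gives a $(t+1)$-bar representation of $H$, and you are done.

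So the missing idea is just to invoke Theorem~\ref{4} at the right moment rather than to re-engineer its content inside the split. Your ``room-making'' step (adding an extra vertex to each part) is then superfluous: the extra bar per part comes from Theorem~\ref{4}, not from an extra split vertex. Your proposed endgame --- forcing all cut-vertices onto one face via within-part transfers alone --- would amount to reproving a constrained version of Theorem~\ref{4}, and as you note, a within-part transfer onto a fresh isolated vertex merely relabels, so it is not clear this can be made to work at all.
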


\begin{proof}
Let $t=\sigma(G)$, and let Let $G'$ be a planar $t$-split of $G$.  Each vertex
$u\in V(G)$ is represented in $G'$ by an independent set $S(u)$ of size at most
$t$ such that vertices are adjacent in $G$ if and only if some vertices
representing them are adjacent in $G'$.

Let $H$ be a spanning subgraph of $G$, and let $H'$ be the spanning subgraph
of $G'$ obtained from $G'$ by deleting all edges of $G'$ whose endpoints
represent vertices that are not adjacent in $H$.  If $H'$ has no cut-vertices,
then $H'$ is a bar visibility graph, and a bar visibility representation of
$H'$ yields a $t$-bar representation of $H$.

However, $H'$ may have cut-vertices.  If $u_1$ and $u_2$ are both cut-vertices
of $H'$ that represent $u$, perform the $u_1,u_2$-transfer operation in $H'$.
The resulting graph $\wH$ is planar and has one less cut-vertex than $H'$; in 
particular, fewer vertices in $S(u)$ are cut-vertices.  Furthermore, $\wH$
is a planar $t$-split of $H$; it is planar by Lemma~\ref{1}, and edges that
previously joined representatives of two vertices have been replaced by edges
that join representatives of the same two vertices.

Continue such transfer operations to obtain a final planar $t$-split $H^*$
of $H$.  Although $H^*$ may have cut-vertices, at most one vertex in each set
$S(u)$ is a cut-vertex.  Now apply Theorem~\ref{4} to $H^*$.  We obtain a 
$2$-bar visibility representation of $H^*$ in which the only vertices assigned
two bars are cut-vertices, and there is at most one of these in each set
$S(u)$.  Together, the representatives of $u$ are assigned at most $t+1$ bars.
Combining the bars assigned to representatives of $u$, for each $u$, yields
a $(t+1)$-bar representation of $H$.
\end{proof}

\begin{corollary}
For any graph $G$ on $n$ vertices, $b(G) \leq \CL{n/6}+1$.
\end{corollary}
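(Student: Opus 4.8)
The plan is to take $K_n$ as the ambient graph in Theorem~\ref{2}. An $n$-vertex graph $G$ is a spanning subgraph of the complete graph $K_n$, so Theorem~\ref{2} immediately gives $b(G)\le\sigma(K_n)+1$; thus everything comes down to the bound $\sigma(K_n)\le\CL{n/6}$.

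For $n\ge 7$ there is nothing further to do: as observed just before Theorem~\ref{2}, $\sigma(K_n)\le b(K_n)$, and $b(K_n)=\CL{n/6}$ by Theorem~\ref{5}, so $b(G)\le\CL{n/6}+1$. (One could equally avoid Theorem~\ref{5} and invoke the Ringel--Jackson/Wessel construction~\cite{RJ,Wes} recalled in the introduction, which already exhibits a planar $\CL{n/6}$-split of $K_n$.) For $n\le 4$ the graph $K_n$ is planar, so $\sigma(K_n)=1=\CL{n/6}$ and the bound again holds; indeed $b(G)\le 2$ here already by Theorem~\ref{4}.

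The only cases that need individual attention are $n\in\{5,6\}$, where $\CL{n/6}+1=2$ but $\sigma(K_5)=\sigma(K_6)=2$, so the estimate above yields only $b(G)\le 3$. For these I would prove $b(G)\le 2$ by hand: if $G$ is planar this is Theorem~\ref{4}, and otherwise $G$ is one of the finitely many non-planar graphs on five or six vertices, for each of which it suffices, by Theorem~\ref{3}, to exhibit a planar $2$-split whose cut-vertices lie on a common face. This is routine --- for example $K_5$ and $K_6$ themselves have planar $2$-splits with no cut-vertex at all (split one vertex of $K_5$ into two degree-$2$ vertices placed in two distinct faces of the residual $K_4$, and treat $K_6$ by splitting two of its vertices into pairs), and the remaining non-planar graphs on at most six vertices are handled the same way; alternatively one may cite the small-graph computations in~\cite{Chang04}. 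So the substance of the corollary is the one-line argument for $n\ge 7$, and the only real bookkeeping --- mild at that --- is patching the cases $n\in\{5,6\}$, where the generic spanning-subgraph bound overshoots by one.
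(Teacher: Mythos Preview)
Your argument for $n\ge 7$ is exactly the paper's: apply Theorem~\ref{2} with $G$ a spanning subgraph of $K_n$, and bound $\sigma(K_n)\le b(K_n)=\CL{n/6}$ via Theorem~\ref{5}. The case $n\le 4$ is also handled the same way, and for $n=5$ your treatment matches the paper's in substance (the only non-planar $5$-vertex graph is $K_5$ itself).

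The genuine difference is at $n=6$. You propose a finite case-check over the non-planar $6$-vertex graphs, exhibiting for each a planar $2$-split with all cut-vertices on one face; you sketch $K_6$ and declare the rest routine. This is doable but there are a few dozen such graphs, so ``routine'' is doing real work you have not shown. Your fallback, ``cite the small-graph computations in~\cite{Chang04}'', does not work as stated: \cite{Chang04} only proves the bound $\CL{n/6}+2$ in general, which for $n=6$ gives $3$, not $2$. What \cite{Chang04} \emph{does} contain is the lemma that a $k$-vertex graph decomposing into at most $\CL{k/2}$ paths has $b\le\CL{k/6}+1$; the paper combines this with the Bonamy--Perrett result~\cite{Bonamy16} that every $6$-vertex graph decomposes into three paths to get $b(G)\le 2$ uniformly for $n=6$, avoiding any case enumeration. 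So your proof is correct in outline, but the $n=6$ case is left as an unexecuted finite check, whereas the paper dispatches it with a single citation pair.
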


\begin{proof}
For $n\leq 4$, it is easy to check that all $n$-vertex graphs are bar
visibility graphs, with bar visibility number $1$.

Deleting an edge from $K_5$ leaves a $2$-connected planar graph.  Hence
$K_5$ decomposes into two planar graphs without cut-vertices, and therefore
$b(K_5)\le 2$.  Every proper subgraph $G$ of $K_5$ is planar, and hence by
Theorem \ref{4} we have $b(G)\leq 2$.

Bonamy and Perrett~\cite{Bonamy16} showed that every $6$-vertex graph 
decomposes into three paths.  Chang et al.~\cite{Chang04} showed that if
a $k$-vertex graph $G$ decomposes into at most $\lceil k/2 \rceil$ paths, then
$b(G)\leq \lceil k/6 \rceil+1$.  Together, these statements yield
$b(G)\leq 2$ when $G$ has six vertices.

For $n\geq 7$, Theorems \ref{2} and \ref{5} and $\sigma(K_n)\le b(K_n)$ yield
$b(G) \leq \CL{n/6}+1$ when $G$ has $n$ vertices.
\end{proof}

For $n\ge7$, the conjecture remains open that the bound can be
improved to $\CL{n/6}$.

\end{document}